\documentclass[11pt,oneside]{article}

\usepackage{amsthm, amsmath, amssymb, amsfonts,epsfig}
\usepackage{mathtools}
\usepackage{esint}
\usepackage{epstopdf}

\usepackage{float,graphicx}
\usepackage[font=sl,labelfont=bf]{caption}
\usepackage{subcaption}



\usepackage{enumerate}


\usepackage[colorlinks=true, pdfstartview=FitV, linkcolor=blue,
            citecolor=blue, urlcolor=blue]{hyperref}
\usepackage[usenames]{color}
\definecolor{Red}{rgb}{0.7,0,0.1}
\definecolor{Green}{rgb}{0,0.7,0}

\usepackage[numbers]{natbib}

\usepackage{url}
\makeatletter\def\url@leostyle{%
 \@ifundefined{selectfont}{\def\UrlFont{\sf}}{\def\UrlFont{\scriptsize\ttfamily}}} \makeatother\urlstyle{leo}

\usepackage{accents, comment}

\usepackage{bbm}

\usepackage{mathrsfs}


\setlength{\textwidth}{6.5in}     
\setlength{\oddsidemargin}{0in}   
\setlength{\evensidemargin}{0in}  
\setlength{\textheight}{8.5in}    
\setlength{\topmargin}{0in}       
\setlength{\headheight}{0in}      
\setlength{\headsep}{.3in}         
\setlength{\footskip}{.5in}       

\newtheorem{theorem}{Theorem}

\newtheorem{lemma}[theorem]{Lemma}
\newtheorem{corollary}[theorem]{Corollary}

\theoremstyle{definition}
\newtheorem{definition}[theorem]{Definition}

\theoremstyle{remark}
\newtheorem{remark}[theorem]{Remark}

\numberwithin{equation}{section}
\numberwithin{theorem}{section}




\def\cD{\mathcal{D}}

\def\cM{\mathcal{M}}


\def\bC{\mathbb{C}}

\def\bN{\mathbb{N}}

\def\bR{\mathbb{R}}

\newcommand{\norm}[1]{ \| #1 \| }       



\title{On persistence of spatial analyticity in the hyper-dissipative Navier-Stokes models \footnote{to appear in \emph{IUMJ}}}

\author{
Aseel Farhat\\
\small  Florida State University \\[-0.6ex]
\small  and  \\[-0.6ex]
\small  University of Virginia \\[-0.6ex]
\small  \url{afarhat@fsu.edu}
\and 
Zoran Gruji\'c\\
\small  The University of Alabama at Birmingham \\[-0.6ex]
\small  \url{zgrujic@uab.edu}
}

\begin{document}
\maketitle

\begin{abstract}
The goal of this note is to demonstrate that as soon as the hyper-diffusion
exponent is greater than one, a class of finite time blow-up scenarios consistent
with the analytic structure of the flow (prior to the possible blow-up time) can be
ruled out. The argument is self-contained, in spirit of the regularity theory of the
hyper-dissipative Navier-Stokes system in ‘turbulent regime’ developed by Gruji\'c
and Xu.
\end{abstract}

\section{Introduction}\label{intro}

3D hyper-dissipative (HD) Navier-Stokes (NS) system in $\mathbb{R}^3 \times (0, T)$ reads
\begin{align}
&u_t+ (u\cdot\nabla) u = -(-\Delta)^\beta u-\nabla p               ,\\
&\textrm{div}\ u=0,\\              
&u(\cdot,0)=u_0(\cdot)                 
\end{align}
where an exponent $\beta > 1$ measures the strength of the hyper-diffusion, the vector field $u$ is the velocity of the fluid and the scalar field
$p$ the pressure. 

\medskip

It has been known since the work of J.L. Lions (cf. \citet{Lions1959, Lions1969}) that the 3D HD NS system does not permit a spontaneous formation of singularities
as long as $\beta \ge \frac{5}{4}$. Note that for $\beta = \frac{5}{4}$ the scaling invariant level meets the
energy level, i.e., the system is in the critical state. In contrast, the question of whether a singularity can form in the super-critical regime,
$1 < \beta < \frac{5}{4}$ remains open.

\medskip

In a recent work \citet{Grujic2020}, the authors showed that as soon as $\beta > 1$, and the flow is in a suitably defined `turbulent regime', no 
singularity can form. In particular, the approximately self-similar blow-up -- a leading candidate for a finite time blow-up -- was ruled out
for all HD models.

\medskip

In this short note we rule out (as soon as $\beta > 1$) a class of analytic blow-up profiles. The analytic structure in view is a perturbation
of the geometric series (the radius of analyticity shrinking to zero as the flow approaches the singular time), allowing for precise estimates 
on the derivatives of all orders. The setting is as follows. 
Suppose that the initial datum $u_0$ is in $L^\infty$, denote by $T^*$ the first singular time, and by $x^*$ an isolated (spatial) singularity
at $T^*$. Then, for any $t$ in $(0, T^*)$ the solution $u(t)$ is spatially analytic (see, e.g., \citet{Guberovic2010}) and -- for each of the velocity
components $u^i$ -- we can write the following expansion

\[
 u^i(x, t) = \sum_{k=0}^\infty  \sum_{|\alpha|=k} c^i_{\alpha, k}(t) (x-x^*)^\alpha
\]
where $\alpha$ is the multi-index, $\alpha = (\alpha_1, \alpha_2, \alpha_3)$.

\medskip

Henceforth, we make two assumptions on the flow near $(x^*, T^*)$, the first one spells out the analytic structure, and the second one 
stipulates that the singularity build up is focused.

\medskip

\noindent (A1)  Suppose that there exist constants $\epsilon > 0$ and $M > 1$ such that for any 
$t$ in $(T^*-\epsilon, T^*)$,
$c^i_{\alpha,k}(t) = \delta^i_{\alpha,k}(t) \, \frac{1}{\rho(t)^k}$ where $\rho > 0, 
\rho \to 0$ as $t \to T^*$ and for $k \neq 0$
$\displaystyle{\frac{1}{M^k} \le \delta^i_{\alpha,k} \le M^k}$ while $c^i_0 \to \infty$ as $t \to T^*$. 
Assume that the building block functions are such that the resulting coefficient functions
$c^i_{\alpha, k}$ are monotone -- more precisely, increasing (without bounds) in $t$ (a `runaway train'
scenario).

\medskip

\noindent (A2)  The blow-up is focused, i.e., in a spatial 
neighborhood of $x^*$, say $\cal{N}$,

\[
 \|D^{(k)}u(t)\|_{L^\infty(\cal{N})} = \ |D^{(k)}u(x^*, t)|
\]
for all $t$ in $(T^*-\epsilon, T^*)$.

\medskip

The following is the main result.

\begin{theorem}
Let $\beta > 1$, $u_0 \in L^\infty \cap L^2$, and suppose that (A1) and (A2) hold. Then $T^*$ is not a singular time, and the solution 
$u$ can be continued analytically past $T^*$.
\end{theorem}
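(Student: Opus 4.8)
The plan is to convert the two structural assumptions into a differential inequality along the ladder of derivative norms, and then derive a contradiction from the competition between hyperdissipation and nonlinear steepening. First I would record the quantitative consequences of (A1) and (A2). Writing $M_k(t):=\|D^{(k)}u(t)\|_{L^\infty(\cN)}$, assumption (A2) gives $M_k(t)=|D^{(k)}u(x^*,t)|$, so the maximum of every derivative is pinned at the fixed point $x^*$; and since $\partial^\alpha u^i(x^*,t)=\alpha!\,c^i_{\alpha,k}(t)$, assumption (A1) yields $M_k(t)\asymp k!\,\rho(t)^{-k}$ up to factors $M^k$ and powers of $k$. In particular the radius of spatial analyticity is comparable to $\rho(t)/M$ and collapses as $t\to T^*$, while the monotone (``runaway train'') subsequence $k_l$ gives $\partial_t M_{k_l}(t)\ge 0$ for $t$ close to $T^*$. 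This last fact is the quantity I will contradict.

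Next I would derive the evolution of $M_k$. Because $x^*$ is fixed and is the spatial maximum of $|D^{(k)}u(\cdot,t)|$ for each $t$, I can differentiate the value at $x^*$ directly (no moving-maximum/envelope argument is needed), and $\nabla_x|D^{(k)}u|^2(x^*,t)=0$. Applying $D^{(k)}$ to the momentum equation and pairing with $D^{(k)}u$ at $x^*$ gives
\[
\tfrac12\,\partial_t M_k^2=-\big\langle D^{(k)}u,\,D^{(k)}(-\Delta)^\beta u\big\rangle-\sum_{1\le j\le k}\binom{k}{j}\big\langle D^{(k)}u,\,(D^{(j)}u)\!\cdot\!\nabla D^{(k-j)}u\big\rangle-\big\langle D^{(k)}u,\,D^{(k)}\nabla p\big\rangle,
\]
all evaluated at $(x^*,t)$. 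The key structural gain is that the pure transport term $u\cdot\nabla D^{(k)}u$ (the $j=0$ term), which carries the large mean velocity $c^i_0\to\infty$, pairs with $\nabla|D^{(k)}u|^2(x^*)=0$ and therefore drops out; the runaway amplitude $c^i_0$ is invisible to the derivative ladder.

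Then I would estimate the surviving terms using (A1). By Leibniz and $M_j\asymp j!\rho^{-j}$, the remaining advection and (after the Riesz representation $p=R_iR_j(u^iu^j)$) the pressure produce nonlinear steepening bounded, modulo powers of $k$ and of $M$, by the square of the stretching rate $\rho^{-1}$, i.e. $\lesssim (k/\rho)^2 M_k^2$. The crux is the dissipative term: I must show that at the maximum point it damps $M_k$ at the genuine hyperdissipative rate, $-\langle D^{(k)}u,\,D^{(k)}(-\Delta)^\beta u\rangle\le -c\,(k/\rho)^{2\beta}M_k^2$. For $\beta=1$ this is the elementary fact that $-\Delta$ of $|D^{(k)}u|^2$ is nonnegative at a maximum, together with the lower bound $|\nabla D^{(k)}u(x^*)|=M_{k+1}\asymp(k/\rho)M_k$; for $\beta>1$ the operator $(-\Delta)^\beta$ is nonlocal and has no elementary pointwise maximum principle, so I would instead establish the damping bound from the analytic structure itself (Fourier/Gevrey localization of $u(\cdot,t)$ at the scale $\rho$), in the spirit of the hyperdissipative estimates of \citet{Grujic2020}. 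Combining the three bounds gives $\partial_t\log M_k\le -c\,(k/\rho)^{2\beta}+C\,(k/\rho)^2$, and since $2\beta>2$ and $\rho(t)\to0$, the dissipative term dominates for each fixed large $k$ once $t$ is close enough to $T^*$, forcing $\partial_t M_k<0$.

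Finally I would close the argument by contradiction: evaluating the inequality along the monotone subsequence $k_l$ yields $\partial_t M_{k_l}<0$ near $T^*$, contradicting $\partial_t M_{k_l}\ge 0$ from (A1). Hence $\rho(t)$ cannot decrease to $0$, the radius of analyticity stays bounded below on $(T^*-\epsilon,T^*)$, every $M_k(t)$ remains finite up to $T^*$ with uniform analytic bounds, so $u(\cdot,t)$ converges as $t\to T^*$ to an analytic field and, by local well-posedness, extends analytically past $T^*$; in particular $T^*$ is not a singular time. The main obstacle is exactly the dissipative lower bound for $\beta>1$: quantifying the damping of a nonlocal, super-quadratic dissipation at a single maximum point and showing it beats the nonlinear steepening is what forces the threshold $\beta>1$, and is where the Gruji\'c--Xu machinery is essential; the pressure estimate in $L^\infty$ (unboundedness of the Riesz transforms) is a secondary technical point, to be handled by localizing in $\cN$ and exploiting the analytic decay of $u$.
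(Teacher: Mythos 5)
Your proposal founders at exactly the step you yourself flag as ``the crux'': the pointwise dissipative lower bound $-\langle D^{(k)}u,\,D^{(k)}(-\Delta)^\beta u\rangle(x^*)\le -c\,(k/\rho)^{2\beta}M_k^2$ for $\beta>1$. This is not a technical point to be deferred --- it is an unproven (and in general false-looking) claim on which the entire differential inequality, and hence the contradiction, rests. For $\beta\le 1$ one has positive-kernel representations and Constantin--Vicol-type nonlinear maximum principles, and your $\beta=1$ computation via $\frac12\Delta|f|^2=f\cdot\Delta f+|\nabla f|^2\le 0$ at the maximum is fine; but for $\beta>1$ the operator $(-\Delta)^\beta$ admits no pointwise maximum principle whatsoever (already $\Delta^2$ violates it: at a maximum point the bilaplacian term can have the wrong sign), so there is no reason the dissipation even has a favorable sign at $x^*$, let alone the quantitative rate $(k/\rho)^{2\beta}$. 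Your fallback --- ``Fourier/Gevrey localization of $u(\cdot,t)$ at the scale $\rho$'' --- does not follow from (A1): that assumption gives two-sided bounds on Taylor coefficients at a \emph{single point} with multiplicative slack $M^{\pm k}$, which constrains the analyticity radius at $x^*$ but does not localize the solution in frequency, and in particular gives no lower bound on the dissipative quadratic form evaluated at one point. Nor does the machinery of \citet{Grujic2020} supply such an estimate: that theory contains no pointwise damping bound for $(-\Delta)^\beta$; it is a harmonic-measure/sparseness/analyticity-radius framework acting on $L^\infty$ norms over time intervals --- which is precisely the route the paper takes, not an ingredient you can borrow to patch a one-point ODE argument. (Secondary, but real: your nonlinear and pressure bounds ``modulo powers of $M$'' hide factors $M^{ck}$, and the pressure at $x^*$ sees the solution outside the ball of radius $\sim\rho/M$ where (A1) controls nothing beyond (A2) and the energy; these are repairable for fixed $k$ as $\rho\to 0$, but would need to be done honestly.)

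For contrast, the paper never touches the nonlocal operator pointwise. It converts (A1)--(A2), via Taylor's theorem, into the ascending chain condition $\|D^{(j)}u(t)\|^{1/(j+1)}\le c_0\,(j!)^{1/(j+1)}(k!)^{-1/(k+1)}\|D^{(k)}u(t)\|^{1/(k+1)}$ of Corollary \ref{accending_chain_Hyp}, which upgrades the lower bound on the analyticity radius at level $k$ to $\rho_k\sim\|D^{(k)}u\|^{-\frac{1}{2\beta-1}\frac{1}{k+1}}$; it then plays this against the \emph{a priori} sparseness scale $r_k\sim\|D^{(k)}u\|^{-1/(k+3/2)}$ of Theorem \ref{th:LerayZalpha} (an energy-level fact), and for $\beta>1$ the exponent gap is positive for $k$ large, so the harmonic-measure regularity criterion (Theorem \ref{th:SparsityRegDk_Hyp}) yields $\|D^ku(t)\|\le\|D^ku(s)\|$, contradicting the escape-time property that monotonicity of the coefficients guarantees at every time. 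Your observation that the transport term (and the runaway amplitude $c_0^i$) drops out at the fixed maximum point is genuinely nice, and the logical skeleton (contradict monotonicity along $k_l$) parallels the paper's; but as written the proposal replaces the paper's working mechanism with a dissipation estimate that nobody knows how to prove in the regime $\beta>1$ where the theorem lives, so it does not constitute a proof.
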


Let us remark that the above theorem can be derived as a consequence of the general theory presented in \citet{Grujic2020}.
The main point of this note is to present a short, self-contained argument tailored to the class
of potential blow-up profiles in view.


\section{Preliminaries}

The purpose of this section is to review some concepts and results from the general theory of controlling the
$L^\infty$-fluctuations via sparseness of the regions of intense fluid activity presented in 
\citet{Grujic2013, Bradshaw2019, Grujic2020}.

\bigskip

The first one is a local-in-time spatial analyticity result focusing on derivatives of order $k$.

\begin{theorem}\label{th:MainThmVelHyp}[\citet{Grujic2020}]
Let $\beta > 1, u_0 \in L^2$, $k$ a positive integer and $D^{i}u_0 \in L^\infty$ for $0 \le i \le k$. Fix a constant $M>1$ and let
\begin{align}
T_*&=\min\left\{\left(C_1(M) 2^{k} \left(\|u_0\|_2\right)^{k/(k+\frac{3}{2})} \left(\|D^{k}u_0\|_\infty)\right)^{\frac{3}{2}/(k+\frac{3}{2})} \right)^{-\frac{2\beta}{2\beta-1}}, \right. \notag
\\
&\qquad \left. \left(C_2(M) \left(\|u_0\|_2\right)^{(k-1)/(k+\frac{3}{2})} \left(\|D^{k}u_0\|_\infty\right)^{(1+\frac{3}{2})/(k+\frac{3}{2})} \right)^{-1} \right\} \label{eq:TimeLengthHyp}
\end{align}
where $C(M)$ is a constant depending only on $M$. Then there exists a solution
\begin{align*}
u\in C([0,T_*),L^2) \cap C([0,T_*),C^\infty)
\end{align*}
of the 3D HD NS system such that for every $t\in (0,T_*)$ $u$ is a restriction of an analytic function $u(x,y,t)+iv(x,y,t)$ in the region
\begin{align}\label{eq:AnalDomHyp}
\cD_t=: \left\{x+iy\in\bC^3\ \big|\ |y|\le c \, t^{\frac{1}{2\beta}}\right\}\ .
\end{align}
Moreover, $D^ju\in C([0,T_*),L^\infty)$ for all $0\le j\le k$ and
\begin{align}
&\underset{t\in(0,T)}{\sup}\ \underset{y\in\cD_t}{\sup} \|u(\cdot,y,t)\|_{L^2} + \underset{t\in(0,T)}{\sup}\ \underset{y\in\cD_t}{\sup} \|v(\cdot,y,t)\|_{L^2}\le M \|u_0\|_2
\\
&\underset{t\in(0,T)}{\sup}\ \underset{y\in\cD_t}{\sup} \|D^{k}u(\cdot,y,t)\|_{L^\infty} + \underset{t\in(0,T)}{\sup}\ \underset{y\in\cD_t}{\sup} \|D^{k}v(\cdot,y,t)\|_{L^\infty}\le M \|D^{k}u_0\|_\infty\ .
\end{align}
\end{theorem}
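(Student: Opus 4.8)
The plan is to prove Theorem~\ref{th:MainThmVelHyp} by complexifying the Duhamel (mild) formulation and running a Banach fixed-point argument in a function space that simultaneously tracks the $L^2$ energy and the $L^\infty$ norm of the $k$-th derivative on a complex strip whose half-width grows like $t^{1/(2\beta)}$. Writing $S(t)=e^{-t(-\Delta)^\beta}$ for the hyper-dissipative semigroup and $\bP$ for the Leray projection, I would recast the system as
\begin{align*}
u(t) = S(t)u_0 - \int_0^t S(t-s)\,\bP\,\mathrm{div}\,(u\otimes u)(s)\,ds ,
\end{align*}
and seek a fixed point in the space $X$ of functions that, for each $t\in(0,T_*)$, extend to a holomorphic $u(x+iy,t)$ on $\cD_t$ with finite norm controlling $\sup_{|y|\le c\,t^{1/(2\beta)}}\norm{u(\cdot,y,t)}_{L^2}$ together with a time-weighted $\sup_{|y|\le c\,t^{1/(2\beta)}}\norm{D^k u(\cdot,y,t)}_{L^\infty}$.

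The first block of work is the linear theory for $S(t)$. On the Fourier side $\widehat{S(t)u_0}(\xi)=e^{-t\abs{\xi}^{2\beta}}\hat u_0(\xi)$, so evaluation at a complex point $x+iy$ inserts the factor $e^{-y\cdot\xi}$; since $\abs{y\cdot\xi}\le \abs{y}\,\abs{\xi}\le c\,t^{1/(2\beta)}\abs{\xi}$ is absorbed by $e^{-t\abs{\xi}^{2\beta}}$ through Young's inequality whenever $\abs{y}\le c\,t^{1/(2\beta)}$ with $c$ small, the holomorphic extension to $\cD_t$ and the $L^2$ bound in the strip follow at once. The $L^\infty$ and derivative-smoothing estimates require pointwise control of the convolution kernel $K_t^\beta$ of $S(t)$ and of its analytic continuation; by scaling $K_t^\beta(x)=t^{-3/(2\beta)}\Phi_\beta(t^{-1/(2\beta)}x)$, so I would record the smoothing bounds $\norm{D^m S(t)f}_{L^\infty}\lesssim t^{-m/(2\beta)}\norm{f}_{L^\infty}$ and $\norm{D^m S(t)f}_{L^\infty}\lesssim t^{-(m+3/2)/(2\beta)}\norm{f}_{L^2}$ together with their complexified analogues, valid uniformly for $x+iy\in\cD_t$.

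The second block is the bilinear estimate on $B(u,v)(t)=-\int_0^t S(t-s)\bP\,\mathrm{div}(u\otimes v)\,ds$, where the single derivative from $\mathrm{div}$ is absorbed by the semigroup at the cost $(t-s)^{-1/(2\beta)}$, integrable in time exactly because $\beta>1$. Distributing the $k$ derivatives by the Leibniz rule, the two feedback quantities are the scale-invariant $\norm{u}_{L^\infty}$ and the weight-one $\norm{\nabla u}_{L^\infty}$, both recovered from the energy and top-derivative norms by Gagliardo--Nirenberg interpolation. Measuring regularity by scaling weight --- $\norm{u}_{L^2}$ carries weight $-\tfrac32$ and $\norm{D^k u}_{L^\infty}$ carries weight $k$ in $\bR^3$, a total budget of $k+\tfrac32$ --- one finds
\begin{align*}
\norm{u}_{L^\infty}\lesssim \norm{u}_{L^2}^{k/(k+3/2)}\,\norm{D^k u}_{L^\infty}^{(3/2)/(k+3/2)}, \qquad \norm{\nabla u}_{L^\infty}\lesssim \norm{u}_{L^2}^{(k-1)/(k+3/2)}\,\norm{D^k u}_{L^\infty}^{(5/2)/(k+3/2)},
\end{align*}
which are precisely the two interpolation factors appearing in \eqref{eq:TimeLengthHyp}, with $2^k$ the Leibniz combinatorial weight.

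Finally I would close the scheme. With the weighted norm of $X$ and the strip constant $c$ fixed, the linear and bilinear estimates make the Duhamel map a contraction on a ball of radius comparable to $M\norm{u_0}$ exactly when $t<T_*$; the two terms defining $T_*$ correspond to the closures controlling $\norm{u}_{L^\infty}$ and $\norm{\nabla u}_{L^\infty}$, the exponent $-2\beta/(2\beta-1)$ recording the integrable singularity $(t-s)^{-1/(2\beta)}$ and the exponent $-1$ the complementary lower-order closure. Banach fixed point then yields a unique mild solution on $[0,T_*)$ lying in $X$, hence holomorphic on $\cD_t$ with the stated uniform $L^2$ and $D^kL^\infty$ bounds, and standard parabolic bootstrapping upgrades it to $C([0,T_*),C^\infty)$. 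I expect the main obstacle to be the complexified kernel analysis for non-integer $\beta$: unlike the Gaussian case $\beta=1$, $\Phi_\beta$ has only algebraic decay and its continuation off the real subspace must be controlled carefully so that the smoothing estimates survive the complex shift uniformly up to the boundary $\abs{y}=c\,t^{1/(2\beta)}$; pinning down the precise width $t^{1/(2\beta)}$, rather than merely some positive width, is the crux and is what ties the whole argument to the scaling of the hyper-dissipation.
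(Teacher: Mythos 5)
Your proposal is sound, and your reverse-engineering of the statement is accurate: the two Gagliardo--Nirenberg inequalities you write down are exactly the source of the two factors in \eqref{eq:TimeLengthHyp}, with $2^k$ the Leibniz weight, $-\tfrac{2\beta}{2\beta-1}$ the price of the integrable singularity $(t-s)^{-1/(2\beta)}$, and $-1$ the Gr\"onwall-type closure of the term carrying $\|\nabla u\|_\infty\|D^k u\|_\infty$. But your route is genuinely different from the one behind the paper. The paper (importing the theorem from Gruji\'c--Xu, with the machinery sketched in the proof of Theorem \ref{le:AscendDerHyp}) does not complexify the mild formulation: it builds the approximating sequence $u^{(n)}$ solving \emph{linear} problems, gets analyticity of each iterate separately (Guberovi\'c's argument), and -- the key device, from Gruji\'c--Kukavica -- evaluates the complexified iterates at $y=\alpha t$, setting $U^{(n)}_\alpha(x,t)=u^{(n)}(x,\alpha t,t)$, $V^{(n)}_\alpha(x,t)=v^{(n)}(x,\alpha t,t)$. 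The drift into the complex domain then appears as the harmless terms $\alpha\cdot\nabla V^{(n)}$, $\alpha\cdot\nabla U^{(n)}$, every estimate uses only the \emph{real} kernel via $\|G^{(\beta)}_t\|_{L^1}\lesssim 1$ and $\|\nabla G^{(\beta)}_t\|_{L^1}\lesssim t^{-1/(2\beta)}$, and the width $t^{1/(2\beta)}$ falls out of the constraint $|\alpha|T^{1-\frac{1}{2\beta}}\le \tfrac12$ -- so the complexified-kernel analysis you flag as your crux is avoided entirely. Conversely, your fixed-point scheme is more compact and yields uniqueness gratis, and your obstacle is real but surmountable: for $\beta>1$ the symbol $e^{-t|\xi|^{2\beta}}$ decays super-exponentially, so $G^{(\beta)}_t$ is entire and for $|y|\le c\,t^{1/(2\beta)}$ the factor $e^{-y\cdot\xi}$ is absorbed into $e^{-t|\xi|^{2\beta}/2}$, leaving only the algebraic tail $(1+t^{-1/(2\beta)}|x|)^{-(3+2\beta)}$ forced by the non-smoothness of $|\xi|^{2\beta}$ at the origin -- still integrable, which suffices. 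Do record two tacit points: in the Duhamel term $u(s)$ is holomorphic only on $\cD_s$, so the map lands in $\cD_t$ only thanks to the concavity inequality $s^{1/(2\beta)}+(t-s)^{1/(2\beta)}\ge t^{1/(2\beta)}$; and achieving the stated bounds with a prescribed constant $M>1$ requires tuning the contraction ball against $C_1(M),C_2(M)$, which is where the $M$-dependence of $T_*$ enters.
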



\bigskip

Next we recall definitions of what is meant by local `sparseness  at scale' in this context (\citet{Grujic2013}).

\begin{definition}
For a spatial point $x_0$ and $\delta\in (0,1)$, an open set $S$ is 1D $\delta$-sparse around $x_0$ at scale $r$ if there exists a unit vector $\nu$ such that
\begin{align*}
\frac{|S\cap (x_0-r\nu, x_0+r\nu|}{2r} \le \delta\ .
\end{align*}
\end{definition}

\bigskip

The volumetric version is as follows.

\begin{definition}
For a spatial point $x_0$ and $\delta\in (0,1)$, an open set $S$ is 3D $\delta$-sparse around $x_0$ at scale $r$ if
\begin{align*}
\frac{|S\cap B_r(x_0)|}{|B_r(x_0)|} \le \delta\ .
\end{align*}
\end{definition}

\bigskip

(It is straightforward to check that $3$-dimensional $\delta$-sparseness at scale $r$ implies 1D $(\delta)^\frac{1}{3}$-sparseness at scale $r$;
 the converse is false.)

\bigskip

The following result, a regularity criterion, is a $k$-level version of the vorticity result presented in \citet{Grujic2013, Bradshaw2019}.

\vspace{-0.05in}
\begin{theorem}\label{th:SparsityRegDk_Hyp}[\citet{Grujic2020}]
Let $\beta > 1$, $u_0\in L^\infty\cap L^2$, and $u$ in $C([0,T^*), L^\infty)$ where $T^*$ is the first possible blow-up time. Let $s$ be an escape time for $D^{k}u$, and suppose that there exists a temporal point
\begin{align*}
&\qquad\quad t=t(s)\in \left[s+\frac{1}{c_1(M, k, \beta, \|u_0\|_2) |D^{k}u(s)\|_\infty^{\frac{3}{2k+3}\frac{2 \beta}{2\beta-1}}},\ s+\frac{1}{2 c_1(M, k, \beta, \|u_0\|_2)\|D^{k}u(s)\|_\infty^{\frac{3}{2k+3}\frac{2\beta}{2\beta-1}}}\right]
\end{align*}
such that for any spatial point $x_0$, there exists a scale $r \le \frac{1}{c_2(M, k, \beta, \|u_0\|_2) )\|D^{k}u(t)\|_\infty^{\frac{3}{2k+3}\frac{1}{2\beta-1}}}$ with the property that the super-level set
\begin{align*}
&\qquad\quad V^{i,\pm}=\left\{x\in\bR^3\ |\ (D^{k}u)_i^\pm(x,t)>\frac{1}{2M} \|D^{k}u(t)\|_\infty\right\}
\end{align*}
is 1D $\delta$-sparse around $x_0$ at scale $r$; here the index $(i,\pm)$ is chosen such that $|D^{k}u(x_0,t)|=(D^{k}u)_i^\pm(x_0,t)$, 
$M$ and $\delta$ satisfy \begin{align*}
\frac{1}{2} h+(1-h) M = 1,\qquad h=\frac{2}{\pi}\arcsin\frac{1-\delta^2}{1+\delta^2}, \qquad \qquad \frac{2M}{2M+1} <\delta<1
\end{align*}
(e.g., one can take $\delta=\frac{3}{4}$ for a suitable $1 < M < \frac{3}{2}$), and $c_1, c_2$ are derived from the constants in 
Theorem \ref{th:MainThmVelHyp}.

Then, 
$$ \|D^k u(t)\|_\infty \leq \|D^k u(s)\|_\infty,$$
contradicting $s$ being an escape time for $D^{k}u$,
and there exists $\gamma>0$ such that $u\in L^\infty((0,T^*+\gamma); L^\infty)$, i.e. $T^*$ is not a blow-up time.
\end{theorem}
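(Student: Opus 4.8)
The plan is to realize the sparseness hypothesis as an off-axis analytic majorization of $D^k u$ at the point where its modulus is maximal, and to let the algebraic normalization $\tfrac12 h + (1-h)M = 1$ force the $L^\infty$ norm down to its value at $s$. First I would reset the time origin at $s$ and invoke Theorem \ref{th:MainThmVelHyp} with $u(s)$ as datum (its hypotheses hold since $u_0\in L^\infty\cap L^2$ and $u\in C([0,T^*),L^\infty)$). This produces, for each $\tau$ in the local existence interval, a complex-analytic extension $u(\cdot,y,\tau)+iv(\cdot,y,\tau)$ of the flow into the strip $\cD_\tau=\{x+iy\in\bC^3:|y|\le c\,\tau^{1/(2\beta)}\}$, together with the uniform bound $\sup_{y\in\cD_\tau}\|D^k u(\cdot,y,\tau)\|_\infty\le M\|D^k u(s)\|_\infty$. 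The admissible temporal window for $t=t(s)$ and the admissible scale $r$ are calibrated -- through the powers of $\|D^k u(s)\|_\infty$ and the constants $4^k,2^k,c(M)$ -- precisely so that at time $t$ the analyticity half-width $\tau_0:=c\,(t-s)^{1/(2\beta)}$ is comparable to the sparseness scale $r$; this matching of scales is the one place where the exponent bookkeeping $\tfrac{3}{2k+3}\tfrac{2\beta}{2\beta-1}$ versus $\tfrac{6}{2k+3}\tfrac{2\beta}{2\beta-1}$ must be tracked, but it is mechanical.

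Next I would localize at the point realizing the maximum. Choose $x_0$ to be a maximizer of $|D^k u(\cdot,t)|$ and let $(i,\pm)$ be the component with $|D^k u(x_0,t)|=(D^k u)_i^\pm(x_0,t)=\|D^k u(t)\|_\infty$. Restrict the analytic extension to the complex line $z\mapsto x_0+z\nu$, $z=\xi+i\eta$, through $x_0$ in the sparseness direction $\nu$; on this line $(D^k u)_i^\pm$ is the positive part of a harmonic function, hence subharmonic and bounded by $M\|D^k u(s)\|_\infty$ on a disk of radius $\tau_0$ (which sits inside the strip). I would then record the two boundary regimes on the real segment $(x_0-r\nu,x_0+r\nu)$: off the super-level set $V^{i,\pm}$ -- a set of relative length $\ge 1-\delta$ by $1$D $\delta$-sparseness -- one has $(D^k u)_i^\pm\le\tfrac{1}{2M}\|D^k u(t)\|_\infty\le\tfrac12\|D^k u(s)\|_\infty$, using $\|D^k u(t)\|_\infty\le M\|D^k u(s)\|_\infty$ from the strip bound; everywhere else $(D^k u)_i^\pm\le M\|D^k u(s)\|_\infty$.

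The crux is then a two-constants (harmonic-measure mean-value) estimate. Writing $h$ for the harmonic measure, as seen from $x_0$, of the good sub-segment in the appropriate sub-domain of the complexified line, the sub-mean-value inequality for the subharmonic $(D^k u)_i^\pm$ yields
\[
\|D^k u(t)\|_\infty=(D^k u)_i^\pm(x_0,t)\le h\cdot\tfrac12\|D^k u(s)\|_\infty+(1-h)\,M\,\|D^k u(s)\|_\infty=\Big[\tfrac12 h+(1-h)M\Big]\|D^k u(s)\|_\infty .
\]
The normalization $\tfrac12 h+(1-h)M=1$ (solvable with $h=\tfrac{2}{\pi}\arcsin\tfrac{1-\delta^2}{1+\delta^2}\in(0,1)$ exactly in the range $\tfrac{2M}{2M+1}<\delta<1$, $1<M<\tfrac32$) collapses the bracket to $1$, giving $\|D^k u(t)\|_\infty\le\|D^k u(s)\|_\infty$ and contradicting the definition of an escape time. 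Since a blow-up at $T^*$ would force $\|D^k u(t)\|_\infty\to\infty$ and hence the existence of escape times accumulating at $T^*$, the contradiction rules this out; the running maximum of $\|D^k u(t)\|_\infty$ stays finite, and Theorem \ref{th:MainThmVelHyp} then continues the solution, giving $u\in L^\infty((0,T^*+\gamma);L^\infty)$.

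I expect the main obstacle to be the harmonic-measure step: fixing the correct sub-domain in the complexified line so that $x_0$ sees both the real good-segment and the strip boundary, verifying that the sub-mean-value property genuinely applies to $(D^k u)_i^\pm$ there, and identifying the worst-case placement of the sparse set that produces exactly $h(\delta)=\tfrac{2}{\pi}\arcsin\tfrac{1-\delta^2}{1+\delta^2}$ rather than a larger harmonic measure. A secondary, purely computational, hurdle is confirming that the stated temporal window does force $\tau_0\sim r$, so that $h$ depends on $\delta$ alone and not on the aspect ratio $r/\tau_0$.
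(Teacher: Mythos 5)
Your proposal is correct and follows essentially the same route as the source: the paper states this theorem without proof (importing it from \citet{Grujic2020}), and the underlying argument --- local-in-time analyticity from Theorem \ref{th:MainThmVelHyp} seeded at the escape time $s$, restriction to a complex line through a (near-)maximizer of $|D^k u(\cdot,t)|$ in the sparseness direction, subharmonicity of $(D^k u)_i^\pm$ as the positive part of a harmonic function, and the two-constants/harmonic-measure estimate closing via the normalization $\frac{1}{2}h+(1-h)M=1$ --- is precisely what you reconstruct. The two caveats you flag are indeed the only nontrivial verifications and both go through as you expect: the worst-case placement of the sparse set yielding $h=\frac{2}{\pi}\arcsin\frac{1-\delta^2}{1+\delta^2}$ is handled by the symmetrization result of \citet{Solynin1997}, and the stated temporal window is calibrated exactly so that the sparseness scale $r$ fits inside the analyticity strip, making $h$ depend on $\delta$ alone.
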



The lemma below is the Sobolev $W^{-k,p}$-version of the volumetric sparseness results in \citet{Farhat2017} and \citet{Bradshaw2019},
which -- in turn -- are vectorial versions of the semi-mixedness lemma in \citet{Iyer2014}

\begin{lemma}\label{le:HkSparse}
Let $r\in(0,1]$ and $f$ a bounded function from $\bR^d$ to $\bR^d$ with continuous partial derivatives
of order $k$. Then, for any tuple $(k,\lambda, \delta, p)$, $k\in\bN^d$ with $|k|=k$, $\lambda\in (0,1)$, $\delta\in(\frac{1}{1+\lambda},1)$ and $p>1$, there exists $c^*(k,\lambda,\delta,d,p)>0$ such that if
\begin{align}\label{eq:ZalphaCond}
\|D^{k} f\|_{W^{-k,p}} \le c^*(k,\lambda,\delta,d,p)\ r^{k+\frac{d}{p}} \|D^{k} f\|_\infty
\end{align}
then each of the super-level sets
\begin{align*}
S_{k,\lambda}^{i,\pm}=\left\{x\in\bR^d\ |\ (D^{k} f)_i^\pm(x)>\lambda \|D^{k} f\|_\infty\right\}\ , \qquad 1\le i\le d, \quad k\in\bN^d, |k| =k
\end{align*}
is $r$-semi-mixed with ratio $\delta$.
\end{lemma}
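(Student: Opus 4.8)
The plan is to argue by contradiction via $L^p$–$L^{p'}$ duality for negative-order Sobolev spaces, with $p' = p/(p-1)$ the conjugate exponent. Recall that a set is $r$-semi-mixed with ratio $\delta$ precisely when it is $\delta$-sparse around \emph{every} point at scale $r$; so I would suppose that for some component/sign pair $(i,\pm)$ the set $S_{k,\lambda}^{i,\pm}$ fails to be $r$-semi-mixed with ratio $\delta$. Then there is a center $x_0$ with $|S_{k,\lambda}^{i,\pm}\cap B_r(x_0)| > \delta\,|B_r(x_0)|$. Writing $A = \|D^{k} f\|_\infty$, the goal is to convert this volumetric concentration into a lower bound on $\|D^{k} f\|_{W^{-k,p}}$ that is incompatible with \eqref{eq:ZalphaCond} once $c^*$ is chosen small.

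The mechanism that makes the threshold $\delta > \frac{1}{1+\lambda}$ appear is a one-line sign computation. Treating the $+$ case (the $-$ case is identical after replacing the test function below by its negative): on $S_{k,\lambda}^{i,\pm}\cap B_r(x_0)$ one has $(D^{k} f)_i > \lambda A$, while on the remaining part of the ball $(D^{k} f)_i \ge -A$; hence
\begin{align*}
\int_{B_r(x_0)} (D^{k} f)_i \, \d x \;\ge\; \lambda A\,\delta\,|B_r(x_0)| - A\,(1-\delta)\,|B_r(x_0)| \;=\; \big(\delta(1+\lambda)-1\big)\,A\,|B_r(x_0)|.
\end{align*}
Setting $\eta := \delta(1+\lambda)-1$, the hypothesis $\delta\in(\tfrac{1}{1+\lambda},1)$ is exactly the statement $\eta > 0$, so the integral over the ball is bounded below by $\eta A\,\omega_d r^d$, where $\omega_d = |B_1|$.

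To pass to the dual norm I would test against a fixed smooth bump rescaled to the ball. Choose $\Phi\in C_c^\infty(B_1)$ with $0\le \Phi\le 1$ and $\Phi \equiv 1$ on $B_{1-\tau}$, and set $\phi(x) = \Phi((x-x_0)/r)$; the transition width $\tau = \tau(\eta,d)$ is fixed so that $1-(1-\tau)^d \le \eta/2$, which guarantees $\int \phi\,(D^{k} f)_i \ge \frac{\eta}{2} A\,\omega_d r^d$ after controlling the region where $\phi \neq 1$ by $A$ times its measure. By the scaling $D^\alpha\phi = r^{-|\alpha|}(D^\alpha\Phi)((\cdot-x_0)/r)$ one has $\|D^\alpha\phi\|_{L^{p'}} = r^{-|\alpha|+d/p'}\|D^\alpha\Phi\|_{L^{p'}}$, and since $r\le 1$ the top-order term dominates, giving $\|\phi\|_{W^{k,p'}} \le C(k,d,p',\tau)\,r^{-k+d/p'}$. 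Using $\|(D^{k} f)_i\|_{W^{-k,p}} \le \|D^{k} f\|_{W^{-k,p}}$, duality then yields
\begin{align*}
\tfrac{\eta}{2} A\,\omega_d r^d \;\le\; \langle (D^{k} f)_i,\phi\rangle \;\le\; \|(D^{k} f)_i\|_{W^{-k,p}}\,\|\phi\|_{W^{k,p'}} \;\le\; \|D^{k} f\|_{W^{-k,p}}\,C\,r^{-k+d/p'}.
\end{align*}
Since $d - d/p' = d/p$, this rearranges to $\|D^{k} f\|_{W^{-k,p}} \ge \frac{\eta\omega_d}{2C}\, r^{k+d/p}\,A$, contradicting \eqref{eq:ZalphaCond} as soon as $c^*(k,\lambda,\delta,d,p) := \frac{\eta\omega_d}{2C}$ (or anything smaller). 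This forces every $S_{k,\lambda}^{i,\pm}$ to be $r$-semi-mixed with ratio $\delta$.

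The only genuinely delicate points are bookkeeping rather than conceptual: fixing the cutoff width $\tau$ in terms of $\eta$ so that the positive mass of $\int_{B_r}(D^{k} f)_i$ survives the smoothing, and tracking the scaling exponents so that the powers of $r$ assemble into exactly $r^{k+d/p}$ — which is where the hypothesis $r\le 1$ enters, ensuring the order-$k$ derivatives dominate the lower-order ones in $\|\phi\|_{W^{k,p'}}$. The conceptual heart, by contrast, is the elementary observation that $\delta>\frac{1}{1+\lambda}$ is precisely the condition under which the integral of $(D^{k} f)_i$ over the offending ball has a definite sign, which is what makes a single scalar test function suffice.
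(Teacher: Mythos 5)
Your proof is correct: the sign computation yielding $\eta=\delta(1+\lambda)-1>0$ (which is exactly the hypothesis $\delta>\frac{1}{1+\lambda}$), the rescaled bump with transition width $\tau$ fixed so that $1-(1-\tau)^d\le\eta/2$, and the $W^{-k,p}$--$W^{k,p'}$ duality with the scaling $\|\phi\|_{W^{k,p'}}\lesssim r^{-k+d/p'}$ for $r\le 1$ assemble to precisely the exponent $r^{k+d/p}$ in \eqref{eq:ZalphaCond}, and the contradiction closes since the density failure $|S\cap B_r(x_0)|>\delta|B_r(x_0)|$ is strict. Note that the paper states Lemma~\ref{le:HkSparse} without proof, deferring to \citet{Iyer2014}, \citet{Farhat2017} and \citet{Bradshaw2019}; your argument is exactly the standard duality proof of those semi-mixedness lemmas (the $W^{-k,p}$ analogue of pairing the $H^{-1}$, respectively $B^{-1}_{\infty,\infty}$, norm against a bump supported on an offending ball), so it matches the intended route.
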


This leads to the following \emph{a priori} sparseness result for any $\beta \ge 1$.

\begin{theorem}\label{th:LerayZalpha}\citet{Grujic2020}
Let $u$ be a Leray solution (a global-in-time weak solution satisfying the global energy inequality), and assume that $u$ is in $C((0,T^*), L^\infty)$ for some $T^*>0$. Then for any $t\in (0,T^*)$ the super-level sets
\begin{align*}
&\qquad\quad S_{k,\lambda}^{i,\pm}=\left\{x\in\bR^3\ |\ (D^{k} u)_i^\pm(x)>\lambda \|D^{k} u\|_\infty\right\}\ , \qquad 1\le i\le 3, 
\end{align*}
are 3D $\delta$-sparse around any spatial point $x_0$ at scale
\begin{align}\label{eq:kNaturalScale}
r^*_k(t)=c(\|u_0\|_2) \frac{1}{\|D^{k} u(t)\|_\infty^{2/(2k+3)}}
\end{align}
provided $r^* \in (0, 1]$ and with the same restrictions on $\lambda$ and $\delta$ as in the preceding lemma.
\end{theorem}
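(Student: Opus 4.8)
The plan is to reduce the statement to a direct application of Lemma \ref{le:HkSparse} with $d=3$ and a judicious choice of the exponent $p$ and scale $r$. The key is to control the negative-order Sobolev norm $\|D^{k}u\|_{W^{-k,p}}$ appearing in the hypothesis \eqref{eq:ZalphaCond} by a quantity that is uniformly bounded in time for Leray solutions.

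First I would record the duality estimate. For any multi-index $k$ with $|k|=k$ and any test function $\phi$ with $\|\phi\|_{W^{k,p'}}\le 1$ (where $1/p+1/p'=1$), integration by parts gives $\langle D^{k}u,\phi\rangle=(-1)^{|k|}\langle u, D^{k}\phi\rangle$, so that $|\langle D^{k}u,\phi\rangle|\le \|u\|_{L^p}\|D^{k}\phi\|_{L^{p'}}\le \|u\|_{L^p}$. Taking the supremum over such $\phi$ yields $\|D^{k}u\|_{W^{-k,p}}\le \|u\|_{L^p}$; in other words, measuring $D^{k}u$ in $W^{-k,p}$ simply undoes the $k$ derivatives. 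I would then specialize to $p=2$, since for a Leray solution the global energy inequality gives $\|u(t)\|_{L^2}\le\|u_0\|_{L^2}$ for all $t$, and hence the uniform-in-time bound $\|D^{k}u(t)\|_{W^{-k,2}}\le\|u_0\|_2$.

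With this bound in hand, I would choose the scale $r=r^*_k(t)$ so that the hypothesis \eqref{eq:ZalphaCond} holds at that scale. Setting $c^*\, r^{k+3/2}\|D^{k}u\|_\infty=\|u_0\|_2$ and solving gives $r=(\|u_0\|_2/c^*)^{1/(k+3/2)}\|D^{k}u\|_\infty^{-1/(k+3/2)}$; since $1/(k+\tfrac{3}{2})=2/(2k+3)$, this is exactly the claimed natural scale \eqref{eq:kNaturalScale} with $c(\|u_0\|_2)=(\|u_0\|_2/c^*)^{2/(2k+3)}$. Because $\|D^{k}u(t)\|_{W^{-k,2}}\le\|u_0\|_2=c^*(r^*_k)^{k+3/2}\|D^{k}u\|_\infty$, the inequality \eqref{eq:ZalphaCond} is satisfied at $r=r^*_k$, so Lemma \ref{le:HkSparse} applies (invoking the hypothesis $r^*\in(0,1]$) and the super-level sets $S_{k,\lambda}^{i,\pm}$ are $r^*_k$-semi-mixed with ratio $\delta$ — equivalently, 3D $\delta$-sparse around every spatial point $x_0$ at scale $r^*_k$, which is the assertion.

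The steps are individually routine, and the principal point to get right is the first: confirming that $\|D^{k}u\|_{W^{-k,p}}$ is genuinely controlled by $\|u\|_{L^p}$, so that the otherwise unbounded high derivatives are tamed by passing to the negative Sobolev norm, and that the scaling exponent this produces matches $2/(2k+3)$. A secondary, more technical, point is verifying that $u(t)$ meets the regularity hypotheses of Lemma \ref{le:HkSparse} — boundedness together with continuous derivatives of order $k$; this follows from $u\in C((0,T^*),L^\infty)$ together with the smoothing of the HD NS flow on the regular interval $(0,T^*)$ — and noting that $r$-semi-mixedness with ratio $\delta$ coincides with 3D $\delta$-sparseness around every point at scale $r$.
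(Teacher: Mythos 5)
Your proposal is correct and is precisely the argument the paper intends: Theorem \ref{th:LerayZalpha} is stated (citing \citet{Grujic2020}) as a direct consequence of Lemma \ref{le:HkSparse}, obtained exactly as you do by the duality bound $\|D^{k}u\|_{W^{-k,2}}\le\|u\|_{L^2}$, the energy inequality $\|u(t)\|_{L^2}\le\|u_0\|_2$, and solving \eqref{eq:ZalphaCond} with $p=2$, $d=3$ for the scale, which yields the exponent $1/(k+\tfrac{3}{2})=2/(2k+3)$ in \eqref{eq:kNaturalScale}. Your ancillary checks (smoothness of $u(t)$ on the regular interval so the lemma applies, $r^*\in(0,1]$, and the identification of $r$-semi-mixedness with ratio $\delta$ with 3D $\delta$-sparseness at scale $r$ around every point) are exactly the right ones.
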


\medskip

To summarize, at this point, the \emph{a priori} scale of sparseness  \emph{vs.} the scale of the analyticity radius at level-$k$ are, essentially

\[
r_k =  \|D^{(k)}u\|_\infty^{-\frac{1}{k+\frac{3}{2}}}  \ \ \ \ \ \  vs. \ \ \ \ \ \ \  \rho_k = \|D^{(k)}u\|_\infty^{-\frac{1}{2\beta-1}\frac{3}{2}\frac{1}{k+\frac{3}{2}}}
\]
and in order to effectively control the evolution of  $\|D^{(k)}u\|_\infty$ and prevent the blow-up via the harmonic measure maximum
principle one needs $r_k \le \rho_k$ (Theorem \ref{th:SparsityRegDk_Hyp}).
Not surprisingly, this takes place at Lions' exponent $\beta = \frac{5}{4}$,
independently of $k$.

\medskip

However, it will transpire that certain monotonicity properties of the `chain of derivatives' are capable of upgrading the scale of the level-$k$ 
analyticity radius $\rho_k$ to
\[
  \|D^{(k)}u\|_\infty^{-\frac{1}{2\beta-1}\frac{1}{k+1}}.
\]
In these scenarios, it is transparent that as soon as $\beta > 1$ the regularity threshold will be reached for $k$ large enough 
(the closer $\beta$ is to 1, the larger $k$ needs to be). This is the topic of the following section.

\section{The ascending chain condition and the improved local-in-time existence}
%

Henceforth, the symbol $\lesssim$ will denote a bound up to an absolute constant and $\| \cdot \|$ the $L^\infty$-norm. The following result 
(\citet{Grujic2020}) is
a general statement on how an assumption on a large enough portion of the chain being `ascending', i.e., the higher-order derivatives
dominating the lower-order derivatives, results in the prolonged time of local existence and -- in turn -- the improved estimate on the
analyticity radius. We provide a sketch of the proof and several key estimates for reference.

\medskip

\begin{theorem}\label{le:AscendDerHyp}
Let $\beta > 1$, $u_0 \in L^2$, $D^{i}u_0 \in L^\infty$ for $0 \le i \le k$, and suppose that
\begin{align}\label{eq:AscDerOrdMod}
\|D^mu_0\|^{\frac{1}{m+1}} \lesssim \cM_{m,k} \,  \|D^{k}u_0\|^{\frac{1}{k+1}}\  \qquad \forall\ \ell\le m\le k
\end{align}
where the constants $\{\cM_{j,k}\}$ and the indices $\ell$ and $k$ satisfy
\begin{align}\label{eq:BkCondMHyp}
\sum_{\ell \le i\le j-\ell} \binom j i  \cM_{i,k}^{i+1}\ \cM_{j-i,k}^{j-i+1} \lesssim \phi(j,k)\  \qquad \forall\ 2\ell\le j\le k
\end{align}
and
\begin{align}\label{eq:BkCondSHyp}
\|u_0\|_2  \sum_{0\le i\le \ell} \binom j i \cM_{\ell,k}^{\frac{(\ell+1)(i+3/2)}{\ell+3/2}} \cM_{j-i,k}^{j-i+1}  \left((k!)^\frac{1}{k+1}\|u_0\|\right)^{\frac{(3/2-1)(\ell-i)}{\ell+3/2}}\lesssim \psi(j,k)\  \qquad \forall\ 2\ell\le j\le k\ 
\end{align}
for some functions $\phi$ and $\psi$.
If 
\begin{equation}\label{T_cond}
T\lesssim \left(\phi(j,k)+\psi(j,k)\right)^{-\frac{2\beta}{2\beta-1}} \|D^{k}u_0\|^{-\frac{2\beta}{(2\beta-1)(k+1)}}
\end{equation}
 then for any $\ell\le j \le k$ the complexified solution has the following upper bound
\begin{align}\label{eq:AscDerUpBddHyp}
&\underset{t\in(0,T)}{\sup}\ \underset{y\in\cD_t}{\sup} \|D^ju(\cdot,y,t)\| + \underset{t\in(0,T)}{\sup}\ \underset{y\in\cD_t}{\sup} \|D^jv(\cdot,y,t)\| \lesssim \|D^ju_0\| +  \|D^{k}u_0\|^{\frac{j+1}{k+1}}
\end{align}
where $\cD_t$ is given by \eqref{eq:AnalDomHyp}. 
\end{theorem}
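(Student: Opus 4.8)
The plan is to run a fixed-point/continuity argument on the Duhamel formulation of the complexified system, tracking all derivatives of order between $\ell$ and $k$ simultaneously. Writing $N(u)=\bP\nabla\cdot(u\otimes u)$ for the Leray-projected nonlinearity and $S(t)=e^{-t(-\Delta)^\beta}$ for the hyper-dissipative semigroup, the mild formulation reads $u(t)=S(t)u_0-\int_0^t S(t-\tau)N(u)(\tau)\,d\tau$. I would first extend this identity to the analytic continuation $u(x+iy,t)+iv(x+iy,t)$ on the strip $\cD_t$ of \eqref{eq:AnalDomHyp}, exactly as in the base result Theorem \ref{th:MainThmVelHyp}, so that all subsequent estimates are taken in the norm $\sup_{t<T}\sup_{y\in\cD_t}\|D^j(\cdot,y,t)\|$. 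The only semigroup input needed is the smoothing bound $\|D^a S(t)f\|\lesssim t^{-a/(2\beta)}\|f\|$, valid for the complexified kernel uniformly over the growing strip, which I would quote from the construction underlying Theorem \ref{th:MainThmVelHyp}.

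Next I would apply $D^j$ to the Duhamel identity, move the single derivative in $N$ onto the semigroup, and expand the remaining $D^j(u\otimes u)$ by Leibniz, giving
\[
\|D^j u(t)\|\lesssim \|D^j u_0\|+\int_0^t (t-\tau)^{-\frac{1}{2\beta}}\sum_{i=0}^{j}\binom{j}{i}\|D^i u(\tau)\|\,\|D^{j-i}u(\tau)\|\,d\tau,
\]
and likewise for $v$. I would then split the convolution sum into a \emph{middle} block $\ell\le i\le j-\ell$ and two symmetric \emph{boundary} blocks $0\le i<\ell$. In the continuity argument one posits that \eqref{eq:AscDerUpBddHyp} holds on $(0,T)$ with a doubled constant, so that for every $\ell\le m\le k$ one has $\|D^m u(\tau)\|\lesssim \|D^m u_0\|+\|D^k u_0\|^{(m+1)/(k+1)}$; by the ascending-chain hypothesis \eqref{eq:AscDerOrdMod} the first summand is absorbed into the second, leaving the clean bound $\|D^m u(\tau)\|\lesssim \cM_{m,k}^{m+1}\|D^k u_0\|^{(m+1)/(k+1)}$ for indices in the middle range.

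For the middle block this turns each product into $\cM_{i,k}^{i+1}\cM_{j-i,k}^{j-i+1}\|D^k u_0\|^{(j+2)/(k+1)}$, and the binomial-weighted sum is precisely controlled by $\phi(j,k)$ via \eqref{eq:BkCondMHyp}. For the boundary blocks the low-order factor $D^i u$ with $i<\ell$ lies outside the ascending range, so I would instead interpolate it — Gagliardo–Nirenberg between $\|u_0\|_2$ and $\|D^\ell u\|$ in dimension $d=3$ — which is exactly the origin of the $\|u_0\|_2$ prefactor, the fractional powers of $\cM_{\ell,k}$, and the factor $\big((k!)^{1/(k+1)}\|u_0\|\big)^{(3/2-1)(\ell-i)/(\ell+3/2)}$ appearing in \eqref{eq:BkCondSHyp}; summing these against the binomials yields the bound $\psi(j,k)$. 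Finally, the temporal integral contributes $\int_0^t(t-\tau)^{-1/(2\beta)}\,d\tau\sim t^{(2\beta-1)/(2\beta)}$ (convergent since $\beta>1$), so the nonlinear term is $\lesssim T^{(2\beta-1)/(2\beta)}(\phi+\psi)\|D^k u_0\|^{(j+2)/(k+1)}$. Demanding that this not exceed the target $\|D^k u_0\|^{(j+1)/(k+1)}$ is, after rearranging, exactly the time constraint \eqref{T_cond}, which closes the bootstrap and establishes \eqref{eq:AscDerUpBddHyp}.

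The step I expect to be the main obstacle is the boundary-block bookkeeping: verifying that the Gagliardo–Nirenberg interpolation of the sub-$\ell$ derivatives, together with the factorial control needed to tame $\|D^\ell u\|$ in terms of $\|D^k u_0\|$, reproduces the precise exponents in \eqref{eq:BkCondSHyp} uniformly across $2\ell\le j\le k$. The middle block and the time integration are comparatively routine once the semigroup smoothing and the ascending-chain reduction are in place; the delicate point is matching constants and exponents so that both $\phi$ and $\psi$ enter \eqref{T_cond} additively, and ensuring the complexified estimates survive uniformly over the expanding strip $\cD_t$.
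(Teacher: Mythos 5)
Your skeleton matches the paper's argument closely: Duhamel with the fractional heat kernel, one derivative moved onto the kernel giving the factor $t^{1-\frac{1}{2\beta}}$, the Leibniz sum split into the middle block $\ell\le i\le j-\ell$ (controlled by \eqref{eq:BkCondMHyp} through the ascending-chain bounds) and the two boundary blocks (controlled by Gagliardo--Nirenberg interpolation between $\|u_0\|_2$ and $\|D^\ell\cdot\|_\infty$ plus the normalization $\|D^k u_0\|\lesssim k!\,\|u_0\|^{k+1}$, which is exactly where the factor $\bigl((k!)^{1/(k+1)}\|u_0\|\bigr)^{(3/2-1)(\ell-i)/(\ell+3/2)}$ in \eqref{eq:BkCondSHyp} comes from), and the time restriction \eqref{T_cond} closing the estimate. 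The paper runs this through a Picard approximating sequence with induction on the iteration index $n$ rather than your continuity/bootstrap on the solution itself; that difference is cosmetic and either closing mechanism works.

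The genuine gap is in how you handle the complexification. You propose to ``extend the identity to the analytic continuation\dots exactly as in the base result Theorem \ref{th:MainThmVelHyp}'' and to quote semigroup smoothing ``uniformly over the growing strip.'' But Theorem \ref{th:MainThmVelHyp} only guarantees analyticity on $\cD_t$ up to the time $T_*$ of \eqref{eq:TimeLengthHyp}, which is determined by the classical interpolation estimates and is in general much shorter than the $T$ of \eqref{T_cond}; producing the analytic extension on the growing strip for the \emph{prolonged} time is precisely part of what the theorem asserts, so it cannot be imported. The paper instead rebuilds the extension inside the iteration via the shifted variables $U^{(n)}_\alpha(x,t)=u^{(n)}(x,\alpha t,t)$, $V^{(n)}_\alpha(x,t)=v^{(n)}(x,\alpha t,t)$, which introduces the coupling terms $\alpha\cdot\nabla V^{(n)}$ and $\alpha\cdot\nabla U^{(n)}$ into the equations; these are absorbed under the smallness condition $|\alpha|T^{1-\frac{1}{2\beta}}\le \tfrac12$. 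Moreover, your ``and likewise for $v$'' is not automatic: $V$ has zero initial data, satisfies different equations, and the ascending-chain hypothesis \eqref{eq:AscDerOrdMod} is an assumption on $u_0$ only, so the paper must prove by a separate induction (basis $V^{(0)}$, then the inductive step yielding \eqref{eq:AscDerOrdMod_V}) that the bounds $\|D^mV^{(n)}\|\lesssim \cM_{m,k}^{m+1}\|D^ku_0\|^{(m+1)/(k+1)}$ propagate to the imaginary parts before the middle/boundary block estimates can be run. Your bootstrap can be repaired by incorporating the $\alpha$-parametrization and this $V$-propagation step, but as written the uniform-over-the-strip estimates you rely on are assumed rather than established.
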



\begin{proof}

Construct the approximating sequence as follows,
\begin{align*}
&u^{(0)}=0\ ,\quad \pi^{(0)}=0\ ,
\\
&\partial_t u^{(n)}+( -\Delta)^\beta u^{(n)} = -\left(u^{(n-1)}\cdot \nabla\right)u^{(n-1)} - \nabla\pi^{(n-1)}\ ,
\\
&u^{(n)}(x,0)=u_0(x)\ ,\quad \nabla\cdot u^{(n)}=0\ ,
\\
&\Delta\pi^{(n)}=-\partial_j\partial_k\left(u_j^{(n)} u_k^{(n)}\right)\ .
\end{align*}
By an induction argument (c.f. \citet{Guberovic2010}), $u^{(n)}(t) \in C([0,T], L^\infty(\bR^d))$,  $\pi^{(n)}(t) \in C([0,T], BMO)$, and $u^{(n)}(t)$ and $\pi^{(n)}(t)$ are real analytic for every $t\in (0,T]$ (for any $T>0$). Let $u^{(n)}(x,y,t)+iv^{(n)}(x,y,t)$ and $\pi^{(n)}(x,y,t)+i\rho^{(n)}(x,y,t)$ be the analytic extensions of $u^{(n)}$ and $\pi^{(n)}$ respectively. Then the real and the imaginary parts satisfy
\begin{align}
\partial_t u^{(n)}+(-\Delta)^\beta u^{(n)} &= -\left(u^{(n-1)}\cdot \nabla\right)u^{(n-1)} + \left(v^{(n-1)}\cdot \nabla\right)v^{(n-1)} - \nabla\pi^{(n-1)}\ ,
\\
\partial_t v^{(n)}+(-\Delta)^\beta v^{(n)} &= -\left(u^{(n-1)}\cdot \nabla\right)v^{(n-1)} - \left(v^{(n-1)}\cdot \nabla\right)u^{(n-1)} - \nabla\rho^{(n-1)}\ 
\end{align}
where
\begin{align*}
\Delta \pi^{(n)} &= -\partial_j\partial_k\left(u^{(n)}_ju^{(n)}_k-v^{(n)}_jv^{(n)}_k\right),\qquad \Delta\rho^{(n)} = -2\partial_j\partial_k\left(u^{(n)}_jv^{(n)}_k\right)\ .
\end{align*}
In order to track the expansion of the domain of analyticity in the imaginary directions, define (c.f. \citet{Grujic1998})
\begin{align*}
U_\alpha^{(n)}(x,t)&= u^{(n)}(x,\alpha t,t), && \Pi_\alpha^{(n)}(x,t)= \pi^{(n)}(x,\alpha t,t), 
\\
V_\alpha^{(n)}(x,t)&= v^{(n)}(x,\alpha t,t), && R_\alpha^{(n)}(x,t)= \rho^{(n)}(x,\alpha t,t);
\end{align*}
then the approximation scheme becomes (for simplicity we drop the subscript $\alpha$)
\begin{align*}
\partial_t U^{(n)}+(- \Delta)^\beta U^{(n)} &= -\alpha\cdot \nabla V^{(n)}-\left(U^{(n-1)}\cdot \nabla\right)U^{(n-1)} + \left(V^{(n-1)}\cdot \nabla\right)V^{(n-1)} - \nabla\Pi^{(n-1)}\ ,
\\
\partial_t V^{(n)}+(- \Delta)^\beta V^{(n)} &= -\alpha\cdot \nabla U^{(n)}-\left(U^{(n-1)}\cdot \nabla\right)V^{(n-1)} - \left(V^{(n-1)}\cdot \nabla\right)U^{(n-1)} - \nabla R^{(n-1)}\ ,
\\
\Delta \Pi^{(n)} &= -\partial_j\partial_k\left(U^{(n)}_jU^{(n)}_k-V^{(n)}_jV^{(n)}_k\right),\qquad \Delta R^{(n)} = -2\partial_j\partial_k\left(U^{(n)}_jV^{(n)}_k\right)\ 
\end{align*}
supplemented with the initial conditions
\begin{align*}
U^{(n)}(x,0)=u_0(x),\qquad V^{(n)}(x,0)=0 \qquad\textrm{for all }x\in\bR^3\ .
\end{align*}
This -- via Duhamel -- leads to the following set of iterations,
\begin{align}
D^jU^{(n)}(x,t)& =G_t^{(\beta)}*D^ju_0 - \int_0^t G_{t-s}^{(\beta)}*\nabla D^j\left(U^{(n-1)}\otimes U^{(n-1)}\right) ds + \int_0^t G_{t-s}^{(\beta)}*\nabla D^j\left(V^{(n-1)}\otimes V^{(n-1)}\right) ds \notag
\\
&\quad -\int_0^t G_{t-s}^{(\beta)}*\nabla D^j\Pi^{(n-1)}ds - \int_0^t G_{t-s}^{(\beta)}*\alpha\cdot\nabla D^jV^{(n)}ds\ , \label{eq:IterationHypDU}
\\
D^jV^{(n)}(x,t)& = - \int_0^t G_{t-s}^{(\beta)}* D^j\left(U^{(n-1)}\cdot\nabla\right)V^{(n-1)} ds - \int_0^t G_{t-s}^{(\beta)}* D^j\left(V^{(n-1)}\cdot\nabla\right)U^{(n-1)} ds \notag
\\
&\quad -\int_0^t G_{t-s}^{(\beta)}*\nabla D^j R^{(n-1)}ds - \int_0^t G_{t-s}^{(\beta)}*\alpha\cdot\nabla D^jU^{(n)}ds \label{eq:IterationHypDV}
\end{align}
where $G_t^{(\beta)}$ denotes the fractional heat kernel of order $\beta$.

\medskip

Let
\begin{align*}
K_n &:=\underset{t<T}{\sup}\ \|U^{(n)}\|_{L^2}+\underset{t<T}{\sup}\ \|V^{(n)}\|_{L^2}
\end{align*}
and
\vspace{-0.05in}
\begin{align*}
L_n^{(m)} &:= \underset{t<T}{\sup}\ \|D^mU^{(n)}\|+\underset{t<T}{\sup}\ \|D^mV^{(n)}\|\ , \ \ell\le m \le k\ .
\end{align*}

\medskip

At this point, if one proceeds with the standard estimates and  -- in particular -- use the classical Gagliardo-Nirenberg
interpolation inequalities to estimate the lower-order terms, one arrives at Theorem \ref{th:MainThmVelHyp}. In what follows, we take an
alternative route and replace -- in a large enough portion of the chain -- the classical interpolation inequalities with the ascending 
chain inequalities. 

\medskip

First, we show that -- under a suitable condition -- the assumption \eqref{eq:AscDerOrdMod} on the real parts $U^{(n)}$ will carry over to the imaginary parts $V^{(n)}$. For the basis of induction, notice that
\begin{align}\
\norm{D^mV^{(0)}(x,t)}& = \norm{ \int_0^TG_{t-s}^{(\beta)}*\alpha\cdot\nabla D^mU^{(0)}ds} \notag \\
& = \norm{ \int_0^T\nabla G_{t-s}^{(\beta)}*\alpha\cdot\nabla D^mU^{(0)}ds} \notag\\ 
&\lesssim |\alpha|T^{1-\frac{1}{2\beta}} \norm{D^m U^{(0)}}\notag \\
 &\lesssim |\alpha|T^{1-\frac{1}{2\beta}} M_{m,k}^{m+1} \norm{D^{k} u_0}^{\frac{m+1}{k+1}}. 
\end{align} 
Hence -- assuming $|\alpha|T^{1-\frac{1}{2\beta}} \leq 1/2$ -- yields
\begin{align}\label{eq:AscDerOrdMod_V}
\norm{D^mV^{(0)}(x,t)}& \lesssim M_{m,k}^{m+1} \norm{D^{k} u_0}^{\frac{m+1}{k+1}}
\end{align} 
for $2l\leq m\leq k$. For the inductive step, let us assume that \eqref{eq:AscDerOrdMod} holds for $\{U^{(0)}, U^{(1)}, \dots, U^{(n-2)}\} $, $\{V^{(0)}, V^{(1)}, \dots, V^{(n-2)}\}$ and show that it holds for, e.g., $V^{(n-1)}$. Consider, e.g., the first term on the right-hand side of  \eqref{eq:IterationHypDV}. A straightforward
calculation gives
\begin{align*}
&\left\|\int_0^tG_{t-s}^{(\beta)}*D^m\left(U^{(n-2)}\cdot\nabla\right)V^{(n-2)} ds\right\| \notag \\
&\lesssim T^{1-\frac{1}{2\beta}} 2^{m-1} \|U^{(n-2)}\|_2^{(m-i)/(k+\frac{3}{2})}\|V^{(n-2)}\|_2^{i/(m+\frac{3}{2})}
\|D^mU^{(n-2)}\|^{(i+\frac{3}{2})/(m+\frac{3}{2})} \|D^mV^{(n-2)}\|^{(m-i+\frac{3}{2})/(m+\frac{3}{2})} \notag \\
& \lesssim T^{1-\frac{1}{2\beta}} 2^{k-1}\norm{u_0}_2^{\frac{j}{j+\frac{d}{2}}} M_{m,k}^{m+1} \norm{D^{k}u_0}^{\frac{m+1}{k+1}}
\end{align*}
for any $l\leq m\leq k$. Choosing $T$ as in  \eqref{T_cond} then yields
\begin{align}
&\left\|\int_0^tG_{t-s}^{(\beta)}*D^m\left(U^{(n-2)}\cdot\nabla\right)V^{(n-2)} ds\right\|
\lesssim M_{m,k}^{m+1} \norm{D^{k} u_0}^{\frac{m+1}{k+1}}
\end{align}
for any $l\leq m\leq k$. Estimating the other terms in \eqref{eq:IterationHypDV} in a similar fashion (with a suitable modification in the 
case of the complexified pressure term) leads to 
\begin{align} \label{V_n-1_estimate}
\norm{D^mV^{(n-1)}}_\infty& \lesssim |\alpha|T^{1-\frac{1}{2\beta}} M_{m,k}^{m+1} \norm{D^mU^{(n-1)}}^{\frac{m+1}{k+1}} + M_{m,k}^{m+1} \norm{D^{k}u_0}^{\frac{m+1}{k+1}}\notag\\
&\lesssim M_{m,k}^{m+1} \norm{D^{k}u_0}^{\frac{m+1}{k+1}}
\end{align}
for any $l\leq m\leq k$. 
The estimates on $U^{(n-1)}$ are analogous and one arrives at
\begin{align} \label{eq:AscDerOrdMod_V}
L_{n-1}^{(m)} 
&\lesssim M_{m,k}^{m+1} \norm{D^{k}u_0}^{\frac{m+1}{k+1}}
\end{align}
for any $l\leq m\leq k$.

\medskip

Next, we use assumption \eqref{eq:AscDerOrdMod} and estimate \eqref{eq:AscDerOrdMod_V} to improve the local-in-time result for the
complexified solutions.
We demonstrate the argument on $U^{(n)}\otimes U^{(n)}$ (the rest of the nonlinear terms can be treated in a similar way) via an induction argument. 
For $j>2\ell$,
\begin{align*}
&\left\|\int_0^t G_{t-s}^{(\beta)}*D^j(U^{(n)}\cdot \nabla)U^{(n)}ds\right\| \lesssim t^{1-\frac{1}{2\beta}} \sum_{i=0}^j \binom j i \sup_{s<T}\|D^iU^{(n-1)}(s)\| \sup_{s<T}\|D^{j-i}U^{(n-1)}(s)\|
\\
&\quad \lesssim t^{1-\frac{1}{2\beta}} \left(\sum_{0\le i\le \ell} + \sum_{\ell \le i\le j-\ell} + \sum_{j-\ell\le i\le j}\right) \left(\binom j i \sup_{s<T}\|D^iU^{(n-1)}(s)\| \sup_{s<T}\|D^{j-i}U^{(n-1)}(s)\| \right)
\\
&\quad \lesssim t^{1-\frac{1}{2\beta}} \left(\sum_{\ell \le i\le j-\ell} \binom j i \sup_{s<T}\|D^iU^{(n-1)}(s)\| \sup_{s<T}\|D^{j-i}U^{(n-1)}(s)\| \right.
\\
&\quad \left.+ 2\sum_{0\le i\le \ell} \binom j i \left(\sup_{s<T}\|U^{(n-1)}(s)\|_2\right)^{\frac{\ell-i}{\ell+3/2}} \left(\sup_{s<T}\|D^\ell U^{(n-1)}(s)\|\right)^{\frac{i+3/2}{\ell+3/2}} \sup_{s<T}\|D^{j-i}U^{(n-1)}(s)\|\right)
\\
&\quad \lesssim t^{1-\frac{1}{2\beta}} \left(\sum_{\ell \le i\le j-\ell} \binom j i L_{n-1}^{(i)}L_{n-1}^{(j-i)}+ 2\sum_{0\le i\le \ell} \binom j i K_{n-1}^{\frac{\ell-i}{\ell+3/2}}\left(L_{n-1}^{(\ell)}\right)^{\frac{i+3/2}{\ell+3/2}}L_{n-1}^{(j-i)} \right) =: t^{1-\frac{1}{2\beta}} \left(I+2J\right).
\end{align*}

\medskip

For $I$, \eqref{eq:AscDerOrdMod_V}, \eqref{eq:AscDerOrdMod} and \eqref{eq:BkCondMHyp} yield
\begin{align*}
I &\lesssim \sum_{\ell \le i\le j-\ell} \binom j i\  \cM_{i,k}^{i+1} \|D^{k} u_0\|^{\frac{i+1}{k+1}}\  \cM_{j-i,k}^{j-i+1} \|D^{k} u_0\|^{\frac{j-i+1}{k+1}}
\lesssim \phi(j,k)\|D^{k} u_0\|^{\frac{j+2}{k+1}}.
\end{align*}

\medskip

For $J$, \eqref{eq:AscDerOrdMod_V}, \eqref{eq:BkCondSHyp} and  $\|D^{k}u_0\|\lesssim k!\|u_0\|^{k+1}$ (this without loss of generality) yield
\begin{align*}
J &\lesssim \sum_{0\le i\le \ell} \binom j i\  \|u_0\|_2^{\frac{\ell-i}{\ell+3/2}} \left(\cM_{\ell,k}^{\ell+1} \|D^{k} u_0\|^{\frac{\ell+1}{k+1}}\right)^{\frac{i+3/2}{\ell+3/2}}  \cM_{j-i,k}^{j-i+1}  \|D^{k} u_0\|^{\frac{j-i+1}{k+1}}
\\
&\lesssim \|u_0\|_2\|D^{k} u_0\|^{\frac{j+2}{k+1}} \sum_{0\le i\le \ell} \binom j i \cM_{\ell,k}^{\frac{(\ell+1)(i+3/2)}{\ell+d/2}} \cM_{j-i,k}^{j-i+1} \|D^{k} u_0\|^{\frac{(3/2-1)(\ell-i)}{\ell+3/2}\frac{1}{k+1}}
\\
&\lesssim \|u_0\|_2\|D^{k} u_0\|^{\frac{j+2}{k+1}} \sum_{0\le i\le \ell} \binom j i \cM_{\ell,k}^{\frac{(\ell+1)(i+3/2)}{\ell+d/2}} \cM_{j-i,k}^{j-i+1}  \left((k!)^{\frac{1}{k+1}}\|u_0\|\right)^{\frac{(3/2-1)(\ell-i)}{\ell+3/2}}
\\
& \lesssim \psi(j,k)\|D^{k} u_0\|^{\frac{j+2}{k+1}}.
\end{align*}

\medskip

Treating the other terms in \eqref{eq:IterationHypDU} in a similar fashion gives

\begin{align}\label{asccending_condition_continued}
\|D^jU_n(t)\| 
&\lesssim \|D^ju_0\| + t^{1-\frac{1}{2\beta}} \left(I+2J\right) \lesssim \|D^ju_0\| + t^{1-\frac{1}{2\beta}} (\phi(j,k)+\psi(j,k))\|D^{k} u_0\|^{\frac{j+2}{k+1}}.
\end{align}

\medskip

Hence, as long as $T^{1-\frac{1}{2\beta}}\lesssim (\phi(j,k)+\psi(j,k))^{-1} \|D^{k}u_0\|^{-\frac{1}{k+1}}$, 
\begin{align*}
\sup_{s<t}\|D^jU_n(s)\| 
\lesssim \|D^ju_0\| + \|D^{k} u_0\|^{\frac{j+1}{k+1}}
\end{align*}
as desired. Similarly, with the same condition on $T$,
\begin{align*}
\sup_{s<t}\|D^jV_n(s)\| \lesssim \|D^{k} u_0\|^{\frac{j+1}{k+1}}
\end{align*}
completing the estimate.

\medskip

A standard convergence argument completes the proof  (see \citet{Grujic1998} and \citet{Guberovic2010} for more details). 

\end{proof}

\medskip

The generality of the above result was needed in building the theory of regularity of the 3D HD NS system -- as
soon as $\beta > 1$ -- in a `turbulent regime' presented in \citet{Grujic2020}. Here, we will specify the multiplicative 
coefficients $\cM_{i, k}$ in the ascending chain condition \eqref{eq:AscDerOrdMod} 
to a simple form compatible with the analytic structure, i.e., 
\[
 M_{i,k} = c_0 \frac{(i!)^{\frac{1}{i+1}}}{(k!)^{\frac{1}{k+1}}}
\]
for some constant $c_0 \ge 1$.
This will -- in particular -- yield an explicit condition on the size of the portion of the chain needed to satisfy the
conditions \eqref{eq:BkCondMHyp} and \eqref{eq:BkCondSHyp} and -- in turn -- complete the estimates in the 
previous theorem. Some of the calculations to follow can be optimized further, however, the emphasis here
is on simplicity and transparency.

\medskip

\begin{corollary}\label{accending_chain_Hyp}
Let $\beta > 1, \delta_0 >0$, $u_0 \in L^2$, $k$ a positive integer and $D^i u_0 \in L^\infty$ for $0 \le i \le k$.
Suppose that there exists  a constant $c_0 \ge 1$ such that
\vspace{-0.06in}
\begin{align}\label{eq:AscDerOrd-Hyp}
\|D^ju_0\|^{\frac{1}{j+1}} \leq c_0 \frac{(j!)^{\frac{1}{j+1}}}{(k!)^{\frac{1}{k+1}}}\|D^{k}u_0\|^{\frac{1}{k+1}}\, \qquad  \ell\le j\le k
\end{align}
where $l$ and $k$ satisfy
\vspace{-0.1in}
\begin{align}\label{eq:AscDerCond-Hyp}
\ell! \leq \sqrt{\|u_0\|} \leq (k!)^\frac{1}{k+1}.
\end{align}
Fix $l \le j \le k$ and let $T_j = \frac{1}{c^*}  \,  \left((j!)^{k-j}\|D^{k}u_0\|\right)^{-\frac{2\beta}{(2\beta-1)(k+1)}}$
for a suitable $c_* = c_*(\|u_0\|_2, \beta, c_0, \delta_0)$.
Then the complexified solution has the following upper bound,
\begin{align}\label{eq:AscDerUpBddHyp}
&\underset{t\in(0,T_j)}{\sup}\ \underset{y \in \Omega_t}{\sup} \|D^ju(\cdot , y, t)\| + \underset{t\in(0,T_j)}{\sup}\ \underset{y \in \Omega_t}{\sup} \|D^jv(\cdot , y, t)\| 
\leq \|D^j u_0\| + \delta_0 \frac{(j!)^{\frac{1}{j+1}}}{(k!)^{\frac{1}{k+1}}} \|D^{k}u_0\|^{\frac{j+1}{k+1}}
\end{align}
where the region of analyticity $\Omega_t$  is given by 
\[
\Omega_t=: \left\{ z=x+iy \in \mathbb{C}^3 \ \big|\ |y|\le c \, t^{\frac{1}{2\beta}}\right\}.
\]

\end{corollary}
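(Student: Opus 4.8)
The plan is to read the corollary as the specialization of Theorem~\ref{le:AscendDerHyp} in which the free multipliers are fixed to $\cM_{i,k}=c_0\,(i!)^{1/(i+1)}/(k!)^{1/(k+1)}$. With this choice the hypothesis \eqref{eq:AscDerOrd-Hyp} is literally \eqref{eq:AscDerOrdMod}, so the whole task reduces to (i) checking that the two structural conditions \eqref{eq:BkCondMHyp} and \eqref{eq:BkCondSHyp} hold for this explicit $\cM_{i,k}$, thereby \emph{defining} admissible functions $\phi,\psi$, and (ii) specializing the time condition \eqref{T_cond} and the output estimate of the theorem to the claimed $T_j$ and bound. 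Everything else is inherited from Theorem~\ref{le:AscendDerHyp}.

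First I would evaluate the middle-range sum in \eqref{eq:BkCondMHyp}. The key observation is a clean cancellation: inserting the explicit multipliers, the binomial coefficient absorbs both factorials, so that $\binom{j}{i}\,\cM_{i,k}^{i+1}\,\cM_{j-i,k}^{j-i+1}=c_0^{j+2}\,j!\,(k!)^{-(j+2)/(k+1)}$, which is \emph{independent of} $i$. Hence the sum equals the number of summands ($\le j-2\ell+1$) times this common value, and one may take $\phi(j,k)\asymp (j-2\ell)\,c_0^{j+2}\,j!\,(k!)^{-(j+2)/(k+1)}$.

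The low-range sum \eqref{eq:BkCondSHyp} is the delicate step, and it is where the auxiliary constraint \eqref{eq:AscDerCond-Hyp} is used. This sum has only $\ell+1$ terms, and the same factorial bookkeeping shows that the $i=\ell$ term exactly reproduces the $\phi$-form (now carrying an extra $\|u_0\|_2$). For $i<\ell$ the summand differs from the $i=\ell$ term by a factor whose net power of $(k!)^{1/(k+1)}$ is \emph{zero}: a short exponent computation shows that the contributions of $\cM_{\ell,k}^{-(\ell-i)/(\ell+3/2)}$, of the ratio $\cM_{j-i,k}^{j-i+1}/\cM_{j-\ell,k}^{j-\ell+1}$, and of $\big((k!)^{1/(k+1)}\|u_0\|\big)^{(\ell-i)/(2\ell+3)}$ cancel, leaving only the residual factor $(\sqrt{\|u_0\|}/\ell!)^{(\ell-i)/(\ell+3/2)}$. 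The hypothesis $\ell!\le\sqrt{\|u_0\|}\le (k!)^{1/(k+1)}$ is calibrated precisely so that $\ell$ (hence the number of terms) is controlled and these residual factors are bounded by a constant depending only on $\|u_0\|$; one then gets $\psi(j,k)\lesssim C(\|u_0\|_2,c_0)\,\phi(j,k)$.

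With $\phi+\psi\lesssim C\,c_0^{j+2}\,j!\,(k!)^{-(j+2)/(k+1)}$ in hand I would substitute into \eqref{T_cond}. Using $j!\le k!$ and the algebraic identity $j!\,(k!)^{-(j+2)/(k+1)}=(j!)^{(k-j)/(k+1)}\cdot(j!)^{(j+1)/(k+1)}(k!)^{-(j+2)/(k+1)}$, the second factor, together with the $c_0^{j+2}$ and the counting prefactor, is geometric in $k$ and harmless in the present analytic (geometric-series) setting, so it is absorbed into the constant $c^*=c^*(\|u_0\|_2,\beta,c_0,\delta_0)$; this leaves the single factorial scale $(j!)^{k-j}$ inside $T_j$, matching the statement. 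The $\delta_0$-refinement of the output is then obtained by taking $T_j$ at the threshold, i.e. $T_j^{(2\beta-1)/2\beta}(\phi+\psi)\asymp \delta_0\,(j!)^{1/(j+1)}(k!)^{-1/(k+1)}\|D^ku_0\|^{-1/(k+1)}$, so that the nonlinear contribution in \eqref{asccending_condition_continued} is exactly $\delta_0\,(j!)^{1/(j+1)}(k!)^{-1/(k+1)}\|D^ku_0\|^{(j+1)/(k+1)}$; the asserted bound then follows directly from the estimate established in Theorem~\ref{le:AscendDerHyp}. I expect the main obstacle to be exactly the bookkeeping of the last two paragraphs: confirming the exact cancellation of the $(k!)^{1/(k+1)}$-powers in \eqref{eq:BkCondSHyp} (the role built into \eqref{eq:AscDerCond-Hyp}) and the clean collapse of $\phi+\psi$ onto the single scale $(j!)^{k-j}$, with all geometric-in-$k$ constants safely folded into $c^*$.
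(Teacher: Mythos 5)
Your proposal is correct and takes essentially the same route as the paper's own proof: specialize Theorem~\ref{le:AscendDerHyp} with $\cM_{i,k}=c_0\,(i!)^{1/(i+1)}/(k!)^{1/(k+1)}$, exploit the cancellation that makes each summand in \eqref{eq:BkCondMHyp} independent of $i$ and equal to $j!\,(k!)^{-(j+2)/(k+1)}$ (up to $c_0$-powers), use both halves of \eqref{eq:AscDerCond-Hyp} to reduce the low-range sum \eqref{eq:BkCondSHyp} to the same scale (the paper factors out $\sqrt{\|u_0\|}$, bounds the residual sum $\sum_i \frac{1}{i!}\bigl(\ell!/\sqrt{\|u_0\|}\bigr)^{(i+3/2)/(\ell+3/2)}$ by $e$ via $\ell!\le\sqrt{\|u_0\|}$, and trades $\sqrt{\|u_0\|}\le (k!)^{1/(k+1)}$ against one $(k!)^{-1/(k+1)}$, exactly the cancellation you identify), and then take $T_j$ at the threshold of \eqref{T_cond} so the nonlinear contribution becomes $\delta_0\,(j!)^{1/(j+1)}(k!)^{-1/(k+1)}\|D^k u_0\|^{(j+1)/(k+1)}$. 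The only cosmetic difference is normalization: the paper records $\phi(j,k)=\frac{(j!)^{1/(j+1)}}{(k!)^{1/(k+1)}}(j!)^{(k-j)/(k+1)}$ and $\psi=\|u_0\|_2\,\phi$ rather than your raw form $j!\,(k!)^{-(j+2)/(k+1)}$, and it likewise folds the $j$-dependent $c_0$-powers into a constant, so your bookkeeping matches the paper's at the same level of precision.
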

\begin{proof} 
It is enough to check the conditions \eqref{eq:BkCondMHyp} and \eqref{eq:BkCondSHyp}. 

\medskip

For  \eqref{eq:BkCondMHyp}, notice that 
\begin{align}
\sum_{\ell \le i\le j-\ell} \binom j i  \cM_{i,k}^{i+1}\ \cM_{j-i,k}^{j-i+1} &= \sum_{\ell \le i\le j-\ell} \frac{ j!}{i!(j-i)!} \frac{i!}{(k!)^{\frac{i+1}{k+1}}}\frac{(j-i)!}{(k!)^{\frac{j-i+1}{k+1}}} \notag\\
& =  \sum_{\ell \le i\le j-\ell} \frac{j!}{(k!)^{\frac{j+2}{k+1}}} \notag\\
& \leq \frac{j j!}{(k!)^{\frac{j+2}{k+1}}}\notag \\
& \lesssim \frac{(j!)^{1+ \frac{1}{j+1} -\frac{j+1}{k+1}}}{(k!)^{\frac{1}{k+1}}}\notag \\
& \lesssim \frac{(j!)^{\frac{1}{j+1}}}{(k!)^{\frac{1}{k+1}}} (j!)^{\frac{k-j}{k+1}}.
\end{align}

\medskip

For \eqref{eq:BkCondSHyp}, notice that
\begin{align*} 
& \|u_0\|_2 \sum_{0\le i\le \ell} \frac{j!}{i!(j-i)!} \left( \frac{(l!)^\frac{1}{l+1}}{(k!)^\frac{1}{k+1}}\right)^\frac{(i+3/2)(l+1)}{l+3/2} \frac{(j-i)!}{(k!)^\frac{j-i+1}{k+1}} \left((k!)^\frac{1}{k+1}\|u_0\|\right)^{\frac{\ell-i}{2(\ell+3/2)}} \notag \\
&\qquad =  \|u_0\|_2  \sum_{0\le i\le \ell}\frac{ j!(l!)^\frac{i+3/2}{l+3/2}}{i!} \left(\frac{1}{(k!)^{\frac{1}{k+1}}}\right)^{(l+1)\frac{i+3/2}{l+3/2} + j-i+1} (k!)^{\frac{1}{2(k+1)}\left(1-\frac{i+3/2}{l+3/2}\right)}\|u_0\|^{\frac{1}{2}\left(1-\frac{i+3/2}{l+3/2}\right)}\notag\\
& \qquad = \|u_0\|_2 \frac{ j! \sqrt{\|u_0\|}}{(k!)^{\frac{j+2}{k+1}}} \sum_{0\le i\le \ell} \frac{1}{i!}\left(\frac{l!}{\sqrt{\|u_0\|}}\right)^{\frac{i+3/2}{l+3/2}}.
\end{align*}

\medskip

Hence, under the condition \eqref{eq:AscDerCond-Hyp},
\begin{align} 
&\|u_0\|_2 \sum_{0\le i\le \ell} \binom j i \cM_{\ell,k}^{\frac{(\ell+1)(i+3/2)}{\ell+3/2}} \cM_{j-i,k}^{j-i+1} \left((k!)^\frac{1}{k+1}\|u_0\|\right)^{\frac{(3/2-1)(\ell-i)}{\ell+3/2}} \notag \\ 
&\qquad \lesssim \|u_0\|_2 \frac{1}{(k!)^{\frac{1}{k+1}}}( j!)^\frac{k-j}{k+1} \lesssim \|u_0\|_2  \frac{(j!)^{\frac{1}{j+1}}}{(k!)^{\frac{1}{k+1}}} (j!)^{\frac{k-j}{k+1}}.
\end{align}

\medskip

Consequently, in the notation of the previous theorem, $\phi(j,k) = \frac{(j!)^{\frac{1}{j+1}}}{(k!)^{\frac{1}{k+1}}} (j!)^{\frac{k-j}{k+1}}$, $\psi(j,k) = \|u_0\|_2\,\phi(j,k)$,
and
\[
\|D^ju(t)\| \le \|D^ju_0\| + c_1 \, t^{1-\frac{1}{2\beta}} (\phi(j,k)+\psi(j,k))\|D^{k} u_0\|^{\frac{j+2}{k+1}}
\]
where $c_1$ is a constant depending on $c_0$.
The choice of $T_j$ as in the statement of the corollary yields the desired conclusion.
\end{proof} 


\section{Proof of Theorem 1.1}

\begin{proof}
In what follows, the $L^\infty$-norms will be the $L^\infty$-norms on a ball centered at $x^*$ and contained in $\cal{N}$ -- the neighborhood 
in which the focussing assumption (A2) holds for any $t \in (T^*-\epsilon, T^*)$. Let $k$ be a positive integer and $0 \le j \le k$. By Taylor's theorem

\[
 c^i_{\alpha,k}(t) = \frac{1}{\alpha !} \frac{\partial^k}{\partial^\alpha x} u^i(x^*,t).
\]

Utilizing (A1)-(A2) yields

\[
 \frac{\|D^{(j)} u(t) \|^\frac{1}{j+1}}{\|D^{(k)} u(t) \|^\frac{1}{k+1}}
 \le d  \ M^{\frac{j}{j+1}+\frac{k}{k+1}} \ \rho(t)^{\frac{k}{k+1}-\frac{j}{j+1}} \ \frac{(j!)^\frac{1}{j+1}}{(k!)^\frac{1}{k+1}}
\]
where $d$ is an absolute constant.

\medskip

Since $\rho$ goes to 0 and $\frac{k}{k+1}-\frac{j}{j+1} \ge 0$, for $\epsilon$ small enough, the right hand side
will be bounded by

\[
 d   M^2 \ \frac{(j!)^\frac{1}{j+1}}{(k!)^\frac{1}{k+1}},
\]
i.e., the ascending chain condition \eqref{eq:AscDerOrd-Hyp} is satisfied throughout the chain (with $c_0 = d M^2$).

\medskip

Fix $k$, let $t$ be an escape time for $D^k u$, evolve the system from $t$, and let $s = t + T_k$.

\medskip

Setting $j=k$ in the estimates obtained in the corollary yields 

\medskip

\[
T_k = \frac{1}{c^*}  \, \|D^{k}u(t)\|^{-\frac{2\beta}{(2\beta-1)(k+1)}}
\]
for a suitable $c_* = c_*(\|u_0\|_2, \beta, d, M, \delta_0)$ and

\medskip

\[
 \underset{\tau\in(t, s)}{\sup}\ \underset{y \in \Omega_\tau}{\sup} \|D^ku(\cdot , y, \tau)\| + \underset{\tau\in(t, s)}{\sup}\ \underset{y \in \Omega_\tau}{\sup} \|D^kv(\cdot , y, \tau)\| \leq (1+\delta_0) \|D^k u(t)\|.
\] 

\medskip

Recall that in order to prevent the blow-up via the harmonic measure maximum principle (see \citet{Grujic2013, Bradshaw2019} 
in the case of the velocity and the vorticity fields, respectively)
the scale of the radius of spatial analyticity at $s$, $\rho_k$ needs to dominate the \emph{a priori} scale of sparseness at $s$, $r_k$.
In other words, at the level $k$, a natural small scale associated with the regions of the intense fluid activity -- the scale of sparseness of the
suitably cut super-level sets of the/a maximal component of $D^{(k)}u$ -- needs to fall into the level $k$ diffusion range represented by the lower
bound on the radius of spatial analyticity. This is precisely the regularity criterion described in Theorem \ref{th:SparsityRegDk_Hyp}, 
except that the general estimate on
the analyticity radius is now replaced with the improved estimate obtained in the corollary. 
In particular, the multiplicative 
constant in the estimate on the complexified solution, $1+\delta_0$ ($M$ in Theorem \ref{th:SparsityRegDk_Hyp}) needs to satisfy a suitable 
algebraic inequality 
originating in the calculation of the harmonic measure, this
determines $\delta_0$.

\medskip

Hence, 
$r_k$ \emph{vs.}  $\rho_k$ is now

\[
r_k = c_1(\|u_0\|_2) \, \|D^{(k)}u\|^{-\frac{1}{k+\frac{3}{2}}}  \ \ \ \ \  vs. \ \ \ \ \ 
\rho_k = \frac{1}{c_2(\|u_0\|_2, \beta, d, M, \delta_0)} \,  \|D^{(k)}u\|^{-\frac{1}{2\beta-1}\frac{1}{k+1}}.
\]

\medskip

Notice that the gap in the exponents, 

\[
 \frac{1}{k+3/2} - \frac{1}{(2\beta -1) (k+1)}  = \frac{(2\beta -2)k +(2\beta -5/2)}{(2\beta-1)(k+1)(k+3/2)}
\]
is positive for any $k > \frac{2\beta-5/2}{2-2\beta}$ which is positive for any $\beta$ in the super-critical regime $\beta \in (1, 5/4)$.
As expected, the closer $\beta$ to 1, the large $k$ needs to be. Choosing $k \ge k^*(\|u_0\|_2, \beta, d, M, \delta_0)$ for a 
suitable $k^*$ , (A1) will assure that $\rho_k$ dominates $r_k$.

\medskip

The last thing to check is the range of indices needed in the ascending condition given by the inequality
\eqref{eq:AscDerCond-Hyp}, essentially
\[
 \|u(t)\| \le k^2
\]
(take $l=1$).
\medskip

So far, we have not used the assumption on monotonicity of the coefficients in the Taylor expansions of the blow-up 
profile (checking the ascending chain condition, i.e., monotonicity of the derivatives required only $\rho(t)$ shrinking to 0 as 
we approach the singular time). At this point, monotonicity of the coefficients will give us a quick way to close the 
argument. By monotonicity, any time $t$ in $(T^*-\epsilon, T^*)$ is an escape time for any level $k$. 
Fix $t$ first, then the requirement  $\|u(t)\| \le k^2$ becomes just another lower bound on $k$ (in addition to $k^*$).
\end{proof}

\medskip

\begin{remark}
If one does not assume monotonicity of the coefficients, coordinating the condition assuring that the range of indices is
large enough with the condition needed to assure $\rho_k \ge r_k$ becomes more subtle. In the general case
(\citet{Grujic2020}) dynamics of the chain is deconstructed in monotone pieces (ascending or descending) and the 
`undecided' pieces. Since the utility of the ascending portions of the chain is replacing the classical
Gagliardo-Nirenberg interpolation inequalities, one can view it as `dynamic interpolation'. In the descending portions,
one starts with the general lower bound on the analyticity radius (Theorem \ref{th:MainThmVelHyp}), and then uses
the descending condition in conjunction with the \emph{a priori} sparseness to extend the solution analytically via 
Taylor series. The monotone (`turbulent') regime is then demonstrated to be singularity free for any $\beta > 1$.
In the present work the emphasis is on 
simplicity and clarity in demonstrating how monotonicity of the chain (the ascending case) can rule out a possible formation of
singularities as soon as the hyper-diffusion exponent  is greater than 1, under an additional assumption on monotonicity
of the blow-up profile (a `runaway train' scenario).
\end{remark}

\section{Acknowledgments}

A.F. and Z.G. acknowledge the support of the National Science Foundation via the grants DMS 2206493 and DMS 2307657, respectively.

\bibliographystyle{abbrvnat}
\bibliographystyle{plainnat}

\def\cprime{$'$}

\end{document}